\documentclass[preprint,sort&compress,12pt]{elsarticle}
\usepackage{amscd,amssymb,amsmath,amsthm}
\usepackage[all]{xy}
\usepackage{array}
\usepackage{etoolbox}
\usepackage{mathtools}
\usepackage[most]{tcolorbox}
\usepackage{tikz-cd}

\makeatletter
\def\ps@pprintTitle{%
	\let\@oddhead\@empty
	\let\@evenhead\@empty
	\def\@oddfoot{\reset@font\hfil\thepage\hfil}
	\let\@evenfoot\@oddfoot
	
}

\newdir{ >}{!/8pt/\dir{}*\dir{>}}

\newtheorem{theorem}{Theorem}[section]

\newtheorem{lemma}[theorem]{Lemma}
\newtheorem{corollary}[theorem]{Corollary}
\newtheorem{prop}[theorem]{Proposition}

\newtheorem*{con7*}{Conjecture 7*}
\newtheorem{Remark}{Remark}
\theoremstyle{definition}
\newtheorem{definition}[theorem]{Definition}

\begin{document}

\begin{frontmatter}

\title{Invariance of Schur multiplier, Bogomolov multiplier and the minimal number of generators under a variant of isoclinism}

 \author[IISER TVM]{A. E. Antony}
\ead{ammueliza@gmail.com}
 \author[IISER TVM]{Sathasivam K}
\ead{sathasivam19@iisertvm.ac.in}
\author[IISER TVM]{V.Z. Thomas\corref{cor1}}
\address[IISER TVM]{School of Mathematics,  Indian Institute of Science Education and Research Thiruvananthapuram,\\695551
Kerala, India.}
\ead{vthomas@iisertvm.ac.in}
\cortext[cor1]{Corresponding author. \emph{Phone number}: +91 8921458330}

\begin{abstract}
We introduce the $q$-Bogomolov multiplier as a generalization of the Bogomolov multiplier, and we prove that it is invariant under $q$-isoclinism. We prove that the $q$-Schur Multiplier is invariant under $q$- exterior isoclinism, and as an easy consequence we prove that the Schur multiplier is invariant under exterior isoclinism. We also prove that if $G$, $H$ are $p$-groups and $G/Z^{\wedge}(G)\cong H/Z^{\wedge}(H)$, then the cardinality of the minimal number of generators of $G$ and $H$ are the same. Moreover we prove some structural results about $q$-nonabelian tensor square of groups. 
\end{abstract}

\begin{keyword}
 Schur Multiplier  \sep Bogomolov Multiplier  \sep isoclinism \sep group actions.
\MSC[2020]  20J05 \sep 20J06 \sep 19C09 \sep 20D15  \sep 20F05 \sep 20F14 \sep 20F18 
\end{keyword}

\end{frontmatter}

 \section{Introduction}
 
The notion of isoclinism was introduced by P. Hall in \cite{PH1940}. It is a well-known fact that the Schur Multiplier of any group and the minimal number of generators of a $p$-group are not invariant under isoclinism. In this paper, we define a variant of isoclinism and prove that the Schur Multiplier and the bogomolov multiplier are invariant under this modified version of isoclinism. Invariance of the Bogomolov Multiplier under isoclinism was proved by Bogomolov and B\"{o}hning in \cite{BB} and by Moravec in \cite{PM10}. We define two different notions of $q$-isoclinism and consider two variants of the Bogomolov Multiplier, and we further prove their invariance under $q$-isoclinism. We also prove that if $G$, $H$ are $p$-groups and $G/Z^{\wedge}(G)\cong H/Z^{\wedge}(H)$, then $d(G)=d(H)$, where $d(G)$ is the minimal number of generators of $G$. For a group $G$ and a positive integer $q$, the nonabelian $q$-tensor product and nonabelian $q$-exterior product for $G$-crossed modules were introduced in \cite{CF}, as a generalization of definitions given in \cite{B} and \cite{ER}. In this paper, we consider a special case of this where the crossed modules are given by a group $G$ and the identity morphism $Id: G\rightarrow G$, called the nonabelian q-tensor square of $G$. In \cite{ADST}, the authors prove several structural results for the case $q = 0$. We generalize these structural results for a general $q$ and generalize some result of \cite{RR2017}.

\section{Preliminary Results}

 For $q\geq 1$, the $q$-nonabelian tensor square is defined as follows.
 \begin{definition}
 The \textit{tensor square modulo q}, $G\otimes^q G$ of the $G$-crossed module $Id: G \rightarrow G$ is the group generated by the symbols $g\otimes h$ and $\{( g,g )\}, g, h \in G$  with the following relations for $g,h,g_1,h_1 \in G:$
 \begin{equation}\label{eq:E:2.11}
gg_1 \otimes h = (^gg_1\otimes\ ^gh)(g\otimes h),
 \end{equation}
 \begin{equation}\label{eq:E:2.12}
   g\otimes hh_1 = (g\otimes h)(^hg \otimes\ ^hh_1),
 \end{equation}
 \begin{equation}\label{eq:E:2.13}
 \{(g,g)\}(g_1\otimes h_1)\{(g,g)\}^{-1} =\hspace{2mm} ^{g^{q}}g_1\otimes\hspace{0.5mm} ^{g{^q}}h_1,
 \end{equation}
 \begin{equation}\label{eq:E:2.14}
 \{(gg_1,gg_1)\} = \{(g,g)\}\prod_{i=1}^{q-1}(g^{-1} \otimes (^{g^{1-q+i}}g_1)^i)\{(g_1,g_1)\},
 \end{equation}
 \begin{equation}\label{eq:E:2.15}
 [\{(g,g)\},\{(g_1,g_1)\}] = g^q \otimes {g_1}^q,
 \end{equation}
 \begin{equation}\label{eq:E:2.16}
 \{([g,h],[g,h])\} = (g\otimes h)^{q}.
 \end{equation}
 \end{definition}
 For $q= 0$, the $q$-tensor square is defined to be the construction introduced by R. Brown and J.-L. Loday in \cite{BL1} and \cite{BL2}, called the nonabelian tensor square of the group $G$. Let $\nabla^q(G)$ denote the normal subgroup in $G\otimes ^q G$ generated by elements of the form $g \otimes ^q g$, where $g\in G$. 
 The \textit{exterior square modulo q}, $G\wedge^q G$, is the quotient of the group $G\otimes ^qG$ by $\nabla^q(G)$. The coset represented by the element $g\otimes^q h$ is denoted by $g\wedge ^qh$. Let $\Delta^q(G)$ be the normal subgroup in $G\otimes^q G$ generated by elements of the form $(g \otimes^q h)(h\otimes^q g)$, where $g,h \in G$. 
 
 Now we collect some results that we will need in later sections. 
 
\begin{lemma}\label{L:1.2.3}
Let $G$ be a group. For $g,g_1,h,h_1 \in G$ and $n \in Z$, the following hold in $G\otimes ^q G$.
\begin{itemize}
\item[(i)] $\nabla^q(G)$ and $\Delta^q(G)$ commute with image of $G\otimes G$ in $G\otimes ^q G$ under the natural map. 
\begin{align*}
 [g\otimes g, a \otimes b] = 1,\\
 [(g \otimes h)(h \otimes g), a \otimes b] = 1,
\end{align*} for all $g,h,a,b \in G$. In particular, both  $\nabla^q(G)$ and $\Delta^q(G)$ are abelian.
\item[(ii)]  $\exp(\nabla^q(G)) \mid q$ for $q > 0$.
\item[(iii)] $G$ acts trivially on $\nabla^q(G)$. In particular, $G$ acts trivially on $\Delta^q(G)$.
\item[(iv)]$ [g\otimes h, g_1 \otimes h_1] =  [g,h] \otimes [g_1,h_1].$

\item[(v)] 
$((g^{-1}\otimes g_1)(g_1 \otimes g^{-1}))^{-1} = (g_1 \otimes g)(g \otimes g_1).$
\item[(vi)]$(gg_1 \otimes gg_1) = (g \otimes g)(g_1\otimes g)(g\otimes g_1)(g_1 \otimes g_1).$
In particular, $\Delta^q(G) \subseteq \nabla^q(G)$.
\item[(vii)] If $[g,h] =1$, then
$(g\otimes h^n) = (g \otimes h)^n = (g^n \otimes h).$
\item[(viii)] $(g \otimes h)(g_1 \otimes h_1)(g \otimes h)^{-1} =\ ^{[g,h]}(g_1 \otimes \ h_1).$ 

\item[(ix)]  If $q$ is odd and $[g,h] = 1$, then
$\{(gh,gh)\} = \{(g,g)\}\{(h,h)\}.$
 \item[(x)]
$(g \otimes h^n)(h^n \otimes g) = \big((g \otimes h)(h \otimes g)\big)^n= (g\otimes h)^n(h\otimes g)^n$
 \item[ (xi) ] If $x \in [G,G]$, then $x \otimes x = 1$ and $(x \otimes g)(g \otimes x) = 1$ for every $g \in G$.
 \item[(xii)] $g^{-1}\otimes g = (g\otimes g)^{-1}=g\otimes g^{-1}$ and $g^{-1}\otimes g^{-1} = g\otimes g.$
  \item[(xiii)]$\{(1,1)\}=1.$

\end{itemize}
\end{lemma}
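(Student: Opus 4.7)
The plan is to establish the thirteen items in an order dictated by their logical dependencies, using the defining relations (\ref{eq:E:2.11})--(\ref{eq:E:2.16}) throughout. I would begin with the trivial identity (xiii): from (\ref{eq:E:2.11}) with $g=g_1=1$ one obtains $1\otimes h=1$ for all $h$, and then substituting $g=g_1=1$ in (\ref{eq:E:2.14}) collapses the middle product to give $\{(1,1)\}=\{(1,1)\}^2$, hence $\{(1,1)\}=1$. Part (xii) follows by substituting $g_1=g^{-1}$ in (\ref{eq:E:2.11}) and analogously $h_1=h^{-1}$ in (\ref{eq:E:2.12}), combined with $1\otimes h=1=g\otimes 1$.

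I would next derive the conjugation identity (viii) by the classical nonabelian-tensor-square argument: expand both $(g\otimes h)(g_1\otimes h_1)$ and the other order using (\ref{eq:E:2.11}) and (\ref{eq:E:2.12}) on $(gg_1)\otimes h$ and $g\otimes (hh_1)$ respectively, and compare. Part (i) is then an immediate corollary: specializing (viii) to $g=h$ gives $[g,g]=1$, so $g\otimes g$ commutes with every $g_1\otimes h_1$, and conjugating by $(g\otimes h)(h\otimes g)$ gives the action of $[g,h][h,g]=1$. Since $\nabla^q(G)$ and $\Delta^q(G)$ are generated by such elements (together with the $\{(g,g)\}$ whose action on the image of $G\otimes G$ is governed by (\ref{eq:E:2.13})), both are abelian.

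The interchange identity (iv) then falls out of one application of (viii) by writing $[g\otimes h,g_1\otimes h_1]=(g\otimes h)(g_1\otimes h_1)(g\otimes h)^{-1}(g_1\otimes h_1)^{-1}$ and matching with $[g,h]\otimes[g_1,h_1]$. Parts (v), (vi) and (vii) are direct manipulations using (\ref{eq:E:2.11}), (\ref{eq:E:2.12}), (xii) and induction on $n$; the inclusion $\Delta^q(G)\subseteq\nabla^q(G)$ in (vi) drops out of the expansion of $gg_1\otimes gg_1$ after rearranging by (i). For (ii), specializing (\ref{eq:E:2.16}) to $h=g$ gives $(g\otimes g)^q=\{([g,g],[g,g])\}=\{(1,1)\}=1$, so $\exp(\nabla^q(G))\mid q$. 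Part (iii) follows from centrality (i) together with the fact that the $G$-action on a generator $g\otimes g$ of $\nabla^q(G)$, namely ${}^x(g\otimes g)={}^xg\otimes{}^xg$, can be rewritten via (\ref{eq:E:2.11})--(\ref{eq:E:2.12}) as a commutator, which vanishes modulo $\nabla^q(G)$. Part (xi) comes from (\ref{eq:E:2.16}) applied to $x\in[G,G]$ combined with (ii), and (x) is an iteration of (xi) and (vii).

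The subtlest case is (ix). Applying (\ref{eq:E:2.14}) with $g_1=h$ and $[g,h]=1$, all inner conjugations ${}^{g^{1-q+i}}h$ trivialize and the middle product reduces to $\prod_{i=1}^{q-1}(g^{-1}\otimes h^i)$. By (vii) applied to the commuting pair $(g^{-1},h)$, this equals $(g^{-1}\otimes h)^{1+2+\cdots+(q-1)}=(g^{-1}\otimes h)^{q(q-1)/2}$. Since $q$ is odd, $(q-1)/2$ is an integer, and by (\ref{eq:E:2.16}) with $[g^{-1},h]=1$ we have $(g^{-1}\otimes h)^q=\{(1,1)\}=1$, so the product vanishes and $\{(gh,gh)\}=\{(g,g)\}\{(h,h)\}$. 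The main obstacle throughout is not any single identity but rather the careful bookkeeping of the order in which items are proved, ensuring each is invoked only after it is established; the parity requirement in (ix) and the subtle use of centrality in (iii) are the only genuinely nontrivial wrinkles.
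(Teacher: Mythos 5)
The paper itself states this lemma without proof (it is collected from the literature on $q$-tensor squares), so the only question is whether your sketch would actually go through. Most of it would, but item (xi) as you have argued it does not. You claim $x\otimes x=1$ for $x\in[G,G]$ "comes from (\ref{eq:E:2.16}) applied to $x\in[G,G]$ combined with (ii)". Relation (\ref{eq:E:2.16}) with $x=[g,h]$ reads $\{(x,x)\}=(g\otimes h)^q$: it is a statement about the generator $\{(x,x)\}$, not about the element $x\otimes x$, and (ii) only gives $(x\otimes x)^q=1$, which is far from $x\otimes x=1$. (A sanity check: (xi) must also hold for $q=0$, where (\ref{eq:E:2.16}) and (ii) give you nothing.) The actual argument runs through (iv): for a single commutator, $[g,h]\otimes[g,h]=[g\otimes h,\,g\otimes h]=1$ and $([g,h]\otimes g_1)(g_1\otimes[g,h])=1$ via the derived identity $\mu(t)\otimes a=t\,({}^a t^{-1})$, $a\otimes\mu(t)=({}^a t)\,t^{-1}$; for a product of commutators one expands with (vi) and inducts. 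Since you then invoke (xi) to commute $g\otimes h$ with $h\otimes g$ in (x), the gap propagates there; note also that (vii) cannot be used in (x) because no commutativity of $g$ and $h$ is assumed — the first equality in (x) needs an induction via (\ref{eq:E:2.12}) together with (i) and (iii).

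Two smaller points. In (iii), "vanishes modulo $\nabla^q(G)$" proves the wrong thing: triviality of the induced action on a quotient does not give triviality of the action on $\nabla^q(G)$ itself. What you need is the exact identity ${}^x(g\otimes g)\,(g\otimes g)^{-1}=x\otimes[g,g]=x\otimes 1=1$, coming from the same derived identity $({}^xt)\,t^{-1}=x\otimes\mu(t)$ mentioned above; that identity (a consequence of (\ref{eq:E:2.11})--(\ref{eq:E:2.12}) alone, hence valid on the image of $G\otimes G$ in $G\otimes^q G$) is also the hidden step behind your "one application of (viii)" in (iv), so it deserves to be stated and proved explicitly. The rest — (xiii), (xii), (viii), (i), (ii), (v), (vi), (vii), and in particular your treatment of (ix) with the exponent $q(q-1)/2$ killed by $(g^{-1}\otimes h)^q=\{(1,1)\}=1$ — is correct and is the standard route.
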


\section{Invariance of Schur Multiplier and the minimal number of generators}
 
The notion of isoclinism was introduced by P. Hall in \cite{H}. Two groups $G$ and $H$ are said to be isoclinic if there exists isomorphisms $\alpha: G/Z(G) \rightarrow H/Z(H)$ and $\beta: G' \rightarrow H'$ such that the following diagram commutes:

{\begin{equation*}
\xymatrix@+20pt{
G/Z(G)\times G/Z(G)\ar@{->}[r]^{\alpha\times \alpha}
\ar@{->}[d]^{\pi_1}
 &H/Z(H)\times H/Z(H)
\ar@{->}[d]^{\pi_2}
 \\
G'\ar@{->}[r]^{\beta}
 &H'
},
\end{equation*}} where $\pi_1$ and $\pi_2$ are the commutator maps.

It is well-known that the Schur multiplier and the minimal number of generators of a $p$-group is not invariant under the notion of isoclinism. In this section, we define a variant of the notion of isoclinism under which the Schur multiplier and the minimal number of generators of a $p$-group become invariant. 

Consider the homomorphism $\eta : G \wedge ^q G \rightarrow G$ defined by
$x \wedge y \mapsto [x,y]$ and $\{(g,g)\} \mapsto g^q$ in \cite{CF}. The kernel of this homomorphism is called the $q$-Schur Multiplier of $G$, and is denoted by $M^q(G)$. For $q\geq 1$, we have the following normal subgroups of $G$:
\begin{align*}
    Z^{\wedge}(G) &= \{g\in G\mid g\wedge h =1, \ \forall h\in G\},\\
    Z^{\wedge}_q(G) &= \{g\in G\mid g\wedge^q h =1, \ \forall h\in G\},\\
    E^{\wedge}_q(G) &= \{g\in Z^{\wedge}_q(G)\mid \{(g,g)\}=1\in G\wedge^q G\}.
\end{align*}For $q=0$, set $E^{\wedge}_0(G) := Z^{\wedge}(G)$.

\begin{definition}\label{def:5.1}
We define two groups $G$ and $H$ to be $q$-exterior isoclinic if there exists a pair of isomorphisms $\alpha : G/E_q^{\wedge}(G) \rightarrow H/E_q^{\wedge}(H)$ and $\beta : G^q[G,G] \rightarrow H^q[H,H]$ with the property that whenever $\alpha(g_1E_q^{\wedge}(G)) = h_1E_q^{\wedge}(H)$ and $\alpha(g_2E_q^{\wedge}(G)) = h_2E_q^{\wedge}(H),$ then $\beta([g_1,g_2]) = [h_1,h_2]$ and $\beta(g_1 ^q) = h_1 ^q$ for $g_1,g_2 \in G$. When $q=0$, we will call $0$-exterior isoclinism as exterior isoclinism.
\end{definition}

The exterior isoclinism is an equivalence relation and all cyclic groups are exterior isoclinic to \{1\}.

\begin{definition}\label{def:5.2}
We define two groups $G$ and $H$ to be weak $q$-exterior isoclinic if there exists subgroups $A \leqslant E^{\wedge}_q(G)$, $B \leqslant E^{\wedge}_q(G)$ and a pair of isomorphism $\alpha : G/A \rightarrow H/B$, $\beta : G^q[G,G] \rightarrow H^q[H,H]$ with the property that whenever $\alpha(g_1A) = h_1B$ and $\alpha(g_2A) = h_2B,$ then $\beta([g_1,g_2]) = [h_1,h_2]$ and $\beta(g_1 ^q) = h_1 ^q$ for $g_1,g_2 \in G$
\end{definition}

\begin{theorem}[Invariance of $q$-Schur multiplier]\label{T:5.1}
 Let $G$ and $H$ be two groups, and $A \leqslant E^{\wedge}_q(G)$, $B \leqslant E^{\wedge}_q(H)$.
\begin{enumerate}
\item[(i)]  If $G/A {\cong} H/B$, then $G\wedge^q G \cong H\wedge^q H.$
\item[(ii)] If $G$ and $H$ are weak $q$-exterior isoclinic then $M^q(G)\cong M^q(H)$. In particular, $M^q(G)$ is invariant under $q$-exterior isoclinism.
\end{enumerate}
\end{theorem}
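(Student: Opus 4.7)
The plan is to build an isomorphism $\Phi:G\wedge^q G\to H\wedge^q H$ directly from the isomorphism $\alpha:G/A\to H/B$ by lifting, and then use the map $\beta$ supplied by weak $q$-exterior isoclinism to identify the kernels of $\eta_G$ and $\eta_H$.

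For part (i), I would choose for each $g\in G$ a lift $h_g\in H$ with $\alpha(gA)=h_gB$, and define $\Phi$ on generators by $g_1\wedge^q g_2\mapsto h_{g_1}\wedge^q h_{g_2}$ and $\{(g,g)\}\mapsto\{(h_g,h_g)\}$. Well-definedness rests on two features of $E^{\wedge}_q(H)$: since $B\subseteq E^{\wedge}_q(H)\subseteq Z^{\wedge}_q(H)$, every $b\in B$ satisfies $b\wedge^q h=1$ for all $h\in H$ and $\{(b,b)\}=1$; and $E^{\wedge}_q(H)$ is normal in $H$. Combining these with the exterior analogue of relation (2.1.1), $(hb)\wedge^q h'=({}^hb\wedge^q{}^hh')(h\wedge^q h')=h\wedge^q h'$, we see that the lift in either slot of $\wedge^q$ may be chosen freely within its coset. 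A longer computation using (2.1.4) and (2.1.5) gives $\{(hb,hb)\}=\{(h,h)\}$, after collapsing all the $\otimes$-terms involving $b$ by the same mechanism. The six defining relations (2.1.1)--(2.1.6) of $G\wedge^q G$ are then transported across $\Phi$ termwise, using only that $\alpha$ is a homomorphism modulo $A,B$ and that the resulting error terms involving $B$ are trivial. A symmetric construction with $\alpha^{-1}$ produces the inverse of $\Phi$, so $\Phi$ is an isomorphism.

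For part (ii), weak $q$-exterior isoclinism equips us additionally with $\beta$ satisfying $\beta([g_1,g_2])=[h_{g_1},h_{g_2}]$ and $\beta(g^q)=h_g^q$. These are precisely the relations needed to make the square formed by $\Phi$ on top, $\eta_G,\eta_H$ on the sides, and $\beta$ on the bottom commute on generators of $G\wedge^q G$, hence on all of $G\wedge^q G$. Therefore $\Phi$ restricts to an isomorphism of kernels $M^q(G)\cong M^q(H)$. Specialising $A=E^{\wedge}_q(G)$ and $B=E^{\wedge}_q(H)$ recovers invariance under $q$-exterior isoclinism as the stated corollary.

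The hardest step will be verifying that $\Phi$ is well-defined on the generators $\{(g,g)\}$. Relation (2.1.4), $\{(gg_1,gg_1)\}=\{(g,g)\}\prod_{i=1}^{q-1}(g^{-1}\otimes({}^{g^{1-q+i}}g_1)^i)\{(g_1,g_1)\}$, contains a $q$-dependent product of tensor symbols, and one must use Lemma 2.1.3 repeatedly, together with the trivialising effect of $E^{\wedge}_q(H)$, to show that the correction terms arising from changing a lift all collapse. A parallel issue appears in (2.1.3), where $\{(g,g)\}$ conjugates a tensor: normality of $E^{\wedge}_q(H)$ is needed to guarantee that the conjugated tensor again lifts correctly and that the $b$-error introduced by a change of lift is absorbed.
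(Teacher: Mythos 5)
Your proposal is correct, and part (ii) matches the paper's argument: both set up the commutative ladder between the extension $1\to M^q(G)\to G\wedge^q G\to G^q[G,G]\to 1$ and its $H$-analogue, with $\beta$ on the right, and conclude that the kernels are isomorphic (the paper invokes the short five lemma explicitly). Part (i) is where you diverge. The paper does not construct the isomorphism by hand: it quotes the exact sequence $A\wedge^q G\to G\wedge^q G\to G/A\wedge^q G/A\to 1$ from the literature, observes that the image of $A\wedge^q G$ is trivial precisely because $A\leqslant E^{\wedge}_q(G)$ kills both the generators $a\wedge^q g$ and the generators $\{(a,a)\}$, and so obtains $G\wedge^q G\cong G/A\wedge^q G/A\cong H/B\wedge^q H/B\cong H\wedge^q H$ by applying the induced map of $-\wedge^q-$ to $G/A\cong H/B$. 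Your route instead lifts $\alpha$ to a map on generators and verifies independence of lifts and preservation of the relations \eqref{eq:E:2.11}--\eqref{eq:E:2.16} directly; this is exactly the strategy the paper itself uses later in the proof of Theorem \ref{T:4.2} on the Bogomolov multiplier, and it is sound here too --- your identification of the delicate points (normality of $E^{\wedge}_q(H)$ so that conjugates of elements of $B$ still die in either slot of $\wedge^q$, and the collapse of the product in \eqref{eq:E:2.14} together with $\{(b,b)\}=1$) is accurate. What the paper's citation buys is brevity and the avoidance of a relation-by-relation check; what your construction buys is self-containedness and an explicit description of the isomorphism on generators, which is what one actually needs in order to see that the ladder in part (ii) commutes (a compatibility the paper leaves implicit).
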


\begin{proof}
   \textit{(i)} Consider the exact sequence $A \wedge^q G \rightarrow G \wedge^q G \rightarrow G/A \wedge^q G/A \rightarrow 1$ found in \cite[Proposition 7]{GE4}. Noting that the image of $A \wedge^q G$ is trivial in $G \wedge^q  G$ yields $G \wedge^q G \cong G/A \wedge^q G/A$. The isomorphism $\phi: G/A {\cong} H/B$ induces an isomorphism $\bar{\phi}: G/A\wedge^q G/A \to H/B \wedge^q H/B.$ Hence we obtain, $G\wedge^q G \cong  G/A\wedge^q G/A \cong H/B \wedge^q H/B \cong H\wedge^q H.$ 
   
 \par \textit{(ii)} Consider the following commutative diagram : 

\[
 \begin{tikzcd}
1\arrow{r}& M^q(G)\arrow{r}\arrow{d}& G\wedge^q G \arrow{r}\arrow{d}{\bar{\phi}} & G^q{[G,G]} \arrow{r}\arrow{d}{\beta} &1\\
1\arrow{r}& M^q(H)\arrow{r} &H\wedge^q H \arrow{r} & H^q{[H,H]}\arrow{r} &1.
\end{tikzcd}\\
\]
Using \textit{(i)}, $\bar{\phi}$ is isomorphism. Now the short five lemma yields the result.
\end{proof}

\begin{corollary}[Invariance of Schur Multiplier under exterior isoclinism]
If $G$ and $H$ are two groups that are exterior isoclinic, then $M(G)\cong M(H)$.
\end{corollary}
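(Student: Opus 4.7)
The plan is to deduce the corollary as the immediate $q=0$ specialization of Theorem~\ref{T:5.1}(ii). First I would unwind the definitions at $q = 0$: by the convention stated just before Definition~\ref{def:5.1}, $E^{\wedge}_0(G) = Z^{\wedge}(G)$, and since $G^0 = \{1\}$ the subgroup $G^q[G,G]$ collapses to the commutator subgroup $[G,G]$, while the auxiliary compatibility $\beta(g_1^q) = h_1^q$ becomes vacuous. Consequently, the data $(\alpha,\beta)$ witnessing $0$-exterior isoclinism is precisely a pair of isomorphisms $\alpha : G/Z^{\wedge}(G) \to H/Z^{\wedge}(H)$ and $\beta : [G,G] \to [H,H]$ commuting with the commutator map, which is exactly the notion of exterior isoclinism invoked in the corollary.

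Next I would identify $M^0(G)$ with the classical Schur multiplier $M(G)$. At $q = 0$ the group $G \wedge^0 G$ is the Brown--Loday nonabelian exterior square of \cite{BL1,BL2}, and the homomorphism $\eta : G \wedge G \to [G,G]$ sending $g \wedge h \mapsto [g,h]$ has kernel canonically isomorphic to $H_2(G,\mathbb{Z}) = M(G)$. This is a standard fact on which the definition of $M^q$ in the excerpt is modeled, so $M^0(G) \cong M(G)$, and similarly $M^0(H) \cong M(H)$.

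With these two identifications in place, the corollary reduces to a single application of Theorem~\ref{T:5.1}(ii) with $q = 0$: exterior isoclinic groups are $0$-exterior isoclinic, hence $M^0(G) \cong M^0(H)$, which translates to $M(G) \cong M(H)$. There is essentially no obstacle here; the proof is pure bookkeeping at $q = 0$, since all the genuine work has already been carried out in establishing Theorem~\ref{T:5.1}.
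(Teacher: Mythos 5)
Your proposal is correct and is exactly the argument the paper intends: the corollary is stated without proof as the immediate $q=0$ case of Theorem~\ref{T:5.1}(ii), using the conventions $E^{\wedge}_0(G)=Z^{\wedge}(G)$, $G^0[G,G]=[G,G]$, and the standard identification of $M^0(G)$ with the classical Schur multiplier via the Brown--Loday exterior square. Your write-up just makes this bookkeeping explicit.
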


\begin{corollary}\label{C:5.1}
Let $G$ be a group, and $H\leqslant G$, $K\leqslant Z^{\wedge}(G).$ If $G=HK$ and $H\cap K\leqslant Z^{\wedge}(H)$, then $M(G)\cong M(H)$.
\end{corollary}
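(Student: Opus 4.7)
The plan is to reduce this to Theorem~\ref{T:5.1}(ii) applied with $q=0$, by exhibiting data of weak $0$-exterior isoclinism between $G$ and $H$. Since $E^{\wedge}_0(G)=Z^{\wedge}(G)$ and $E^{\wedge}_0(H)=Z^{\wedge}(H)$, the natural candidates for the subgroups in Definition~\ref{def:5.2} are $A:=K$ and $B:=H\cap K$; both containments $A\leq Z^{\wedge}(G)$ and $B\leq Z^{\wedge}(H)$ are given in the hypothesis of the corollary.

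For the isomorphism $\alpha$, I would use that $G=HK$: the second isomorphism theorem provides an isomorphism $H/(H\cap K)\xrightarrow{\cong} HK/K=G/K$ induced by the inclusion $H\hookrightarrow G$, and I take $\alpha$ to be its inverse.

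To construct $\beta$, I first observe that $Z^{\wedge}(G)\leq Z(G)$: if $z\in Z^{\wedge}(G)$, then $z\wedge g=1$ in $G\wedge G$ for every $g\in G$, and applying the commutator map $\eta$ yields $[z,g]=1$. Hence $K\leq Z(G)$, so
\[
[G,G]=[HK,HK]=[H,H],
\]
and I may take $\beta$ to be the identity on this common commutator subgroup. Compatibility is then immediate: if $\alpha(g_iK)=h_i(H\cap K)$, then by construction of $\alpha$ we have $g_i=h_ik_i$ for some $k_i\in K\leq Z(G)$, whence $[g_1,g_2]=[h_1k_1,h_2k_2]=[h_1,h_2]$. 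The auxiliary condition $\beta(g_1^0)=h_1^0$ reduces to $\beta(1)=1$ and is vacuous.

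With these data, $G$ and $H$ are weak $0$-exterior isoclinic in the sense of Definition~\ref{def:5.2}, and Theorem~\ref{T:5.1}(ii) applied with $q=0$ immediately yields $M(G)\cong M(H)$. I do not anticipate any genuine obstacle in this argument; the only nontrivial ingredient is the one-line verification that $Z^{\wedge}(G)\leq Z(G)$ via the commutator map, which forces $K$ to be central and collapses $[G,G]$ onto $[H,H]$.
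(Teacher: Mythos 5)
Your proposal is correct and follows essentially the same route as the paper: the paper's proof likewise takes the second-isomorphism-theorem identification $G/K=HK/K\cong H/(H\cap K)$, notes that the inclusion $[H,H]\to[G,G]$ is an isomorphism, and invokes Theorem~\ref{T:5.1}(ii) via weak exterior isoclinism. You merely spell out the details the paper leaves as ``easy to verify,'' in particular the observation that $Z^{\wedge}(G)\leqslant Z(G)$ forces $K$ to be central.
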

\begin{proof}
$G/K= HK/K \cong H/(H\cap K)$ and the inclusion $i:[H,H]\to[G,G]$ is an isomorphism. It is easy to verify $G$ and $H$ are weak exterior isoclinic and hence the result follows from theorem \ref{T:5.1} \textit{(ii)}.
\end{proof}
\begin{definition}
We say $G$ is internal central product of $H$ and $K$ amalgamating $D$ if $G=HK$, $H\cap K =D$ and $[H,K]=1$
\end{definition}

\begin{corollary}\label{C:5.1}
Let $G$ be a internal central product of $H$ and $K$ amalgamating $D$. If $K$ cyclic, $( \vert H/D\vert,\vert K/D\vert)=1$ and $D\leqslant Z^{\wedge}(H)$, then $M(G)\cong M(H).$
\end{corollary}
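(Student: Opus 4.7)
My plan is to reduce to the preceding Corollary, which says that if $G=HK$, $K\leqslant Z^{\wedge}(G)$, and $H\cap K\leqslant Z^{\wedge}(H)$, then $M(G)\cong M(H)$. The hypotheses $G=HK$ and $H\cap K=D\leqslant Z^{\wedge}(H)$ are granted here, so the entire task is to verify $K\leqslant Z^{\wedge}(G)$, i.e.\ that $k\wedge g=1$ in $G\wedge G$ for every $k\in K$ and every $g\in G$.

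First I would peel off the $K$-coordinate of $g$. Writing $g=hk'$ with $h\in H$, $k'\in K$ (possible since $G=HK$), the defining exterior-square relation together with $[H,K]=1$ collapses to $k\wedge g=(k\wedge h)(k\wedge k')$. Since $K=\langle y\rangle$ is cyclic, both $k$ and $k'$ are powers of $y$, and Lemma~\ref{L:1.2.3}(vii) (whose tensor identity descends to the exterior quotient) combined with $y\wedge y=1$ forces $k\wedge k'=1$. So the task reduces to proving $k\wedge h=1$ for all $k\in K$, $h\in H$.

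For this, set $m=\lvert H/D\rvert$ and $n=\lvert K/D\rvert$, so that $k^n,h^m\in D$. Because $[k,h]=1$, Lemma~\ref{L:1.2.3}(vii) yields $(k\wedge h)^n=k^n\wedge h$ and $(k\wedge h)^m=k\wedge h^m$. The hypothesis $D\leqslant Z^{\wedge}(H)$ makes $k^n\wedge h=1$ already in $H\wedge H$, and this identity transfers to $G\wedge G$ through the natural homomorphism $H\wedge H\to G\wedge G$ induced by the inclusion $H\hookrightarrow G$. On the other side, $h^m\in D\subseteq K=\langle y\rangle$ is a power of $y$, so $k\wedge h^m$ is of the form $k\wedge y^j$, which vanishes by the cyclic argument above. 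Hence $(k\wedge h)^m=(k\wedge h)^n=1$, and $\gcd(m,n)=1$ forces $k\wedge h=1$.

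With $K\leqslant Z^{\wedge}(G)$ in hand, the preceding Corollary delivers $M(G)\cong M(H)$. The only mild points to double-check are that the commuting-elements identity Lemma~\ref{L:1.2.3}(vii) survives the passage from $G\otimes G$ to $G\wedge G$, and that an exterior-square relation valid in $H\wedge H$ remains valid in $G\wedge G$ under the map induced by inclusion; neither of these poses a genuine obstacle.
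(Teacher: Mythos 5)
Your proof is correct and follows essentially the same route as the paper's: reduce to the preceding corollary by establishing $K\leqslant Z^{\wedge}(G)$, using the cyclicity of $K$, the transfer of the relation $d\wedge h=1$ from $H\wedge H$ to $G\wedge G$ via the map induced by inclusion, and the coprimality hypothesis, all resting on Lemma~\ref{L:1.2.3}~\textit{(vii)}. The only cosmetic difference is how coprimality is deployed: you show the order of the single element $k\wedge h$ divides both $\lvert H/D\rvert$ and $\lvert K/D\rvert$, whereas the paper uses coprimality to write every element of $K$ in the form $k^{ni}d$ with $n=\lvert H/D\rvert$; the two computations are interchangeable.
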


\begin{proof}
   It suffices to show $K\leqslant Z^{\wedge}(G)$. Consider the morphisms $H\wedge H\to G\wedge G$. Since $D\leqslant Z^{\wedge}(H)$, we have $h\wedge d=1$ in $H\wedge H$ and hence $h\wedge d=1$ in $G\wedge G$, for $h\in H$, $d\in D$. Similarly $k\wedge k_1 =1 $ in $G\wedge G$, where $k,k_1\in K$. Denote $\vert H/D\vert$ by $n$ and let $kD$ be a generator of $K/D$. Since $(\vert H/D\vert,\vert K/D\vert)=1$, $k^{n}D$ is also a generator of $K/D$. Thus an element of $K $ is of the form $k^{ni}d$ for some $d\in D, i\in \mathbb Z$. For any $h\in H$,\begin{align*}
       k^{ni}d\wedge h &=\;^{k^{ni}}(d\wedge h)(k^{ni}\wedge h)\\
       &= (k^{ni}\wedge h)
      = (k\wedge h)^{ni}
       = k\wedge h^{ni}  &&\mbox{[Lemma}\ \ref{L:1.2.3}\ \textit{(vii)}] \\
       &= k\wedge d_1=1 , \text{ for some $d_1\in D.$}
   \end{align*}
   Any $g\in G$ is of the form $k_1h$ for some $k_1\in K$ and $h\in H$. Therefore, $k^{ni}d\wedge g =k^{ni}d\wedge k_1h=1$, and hence $K\leqslant Z^{\wedge}(G)$.
\end{proof}
\begin{corollary}
Let $K$ be a cyclic group, and $G=H\times K$. If order of $H$ is co-prime to the order of $K$, then $M(G)\cong M(H).$
\end{corollary}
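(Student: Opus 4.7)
The plan is to recognize this as a direct consequence of the previous corollary on internal central products, with the amalgamated subgroup chosen to be trivial. Since $G = H \times K$ is a direct product, we have $G = HK$, $H \cap K = \{1\}$, and $[H,K] = 1$, so $G$ is the internal central product of $H$ and $K$ amalgamating $D = \{1\}$.

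I would then verify each hypothesis of the preceding corollary. The group $K$ is cyclic by assumption. Taking $D = \{1\}$, we have $|H/D| = |H|$ and $|K/D| = |K|$, so the coprimality condition $(|H/D|, |K/D|) = 1$ is precisely the coprimality hypothesis on $|H|$ and $|K|$. Finally, $D = \{1\} \leqslant Z^{\wedge}(H)$ trivially, since $1 \wedge h = 1$ in $H \wedge H$ for every $h \in H$.

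With all hypotheses in place, the previous corollary immediately yields $M(G) \cong M(H)$. There is no substantial obstacle here; the statement is genuinely a specialization, and the only thing to check is that the direct product structure fits the internal central product framework with $D$ trivial, which is immediate.
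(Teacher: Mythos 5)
Your proof is correct and is precisely the intended derivation: the paper states this corollary without proof as an immediate specialization of the preceding internal central product corollary, and taking $D=\{1\}$ with the verifications you give is exactly how it follows.
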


The next lemma shows the universal property of the quotient group $G/{Z^{\wedge}(G)}$. It says that a surjective group morphism from any group $G$ to a capable group factors through $G/{Z^{\wedge}(G)}$.

\begin{lemma}\label{P:5:1}
Let $G$ be a group and $N$ be a normal subgroup of $G$. If $G/N$ is capable, then $Z^{\wedge}(G)\leqslant N.$
\end{lemma}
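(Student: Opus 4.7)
The plan is to prove this by appealing to the characterization of capability via the exterior center: a group $Q$ is capable if and only if $Z^{\wedge}(Q)=1$. This equivalence is due to Ellis, and it is precisely the fact that makes the statement work. Granting this, the argument reduces to a functoriality check.

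First, I would consider the canonical projection $\pi:G\to G/N$. Functoriality of the nonabelian exterior square yields an induced homomorphism
\[
\pi\wedge\pi:G\wedge G\longrightarrow (G/N)\wedge(G/N),\qquad g\wedge h\longmapsto \pi(g)\wedge\pi(h).
\]
Pick any $g\in Z^{\wedge}(G)$. By definition of the exterior center, $g\wedge h=1$ in $G\wedge G$ for every $h\in G$. Applying $\pi\wedge\pi$ gives $gN\wedge hN=1$ in $(G/N)\wedge(G/N)$ for all $h\in G$. Since $\pi$ is surjective, every element of $G/N$ is of the form $hN$, so this says $gN\in Z^{\wedge}(G/N)$.

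Now I invoke capability of $G/N$: by Ellis's theorem, $Z^{\wedge}(G/N)=1$. Hence $gN=N$, i.e.\ $g\in N$. Since $g\in Z^{\wedge}(G)$ was arbitrary, this proves $Z^{\wedge}(G)\leqslant N$, which is the desired conclusion.

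There is no real obstacle here once one has Ellis's characterization at hand; the whole content of the lemma is really a repackaging of the identification of $Z^{\wedge}(G)$ with the epicenter (the smallest normal subgroup $N$ such that $G/N$ is capable). The only subtlety worth flagging is that one must genuinely use the exterior square rather than the abelianized version: the definition of $Z^{\wedge}(G)$ is formulated via $\wedge$, so the functoriality and vanishing statements have to be applied in the same category, which is exactly what Ellis's theorem guarantees.
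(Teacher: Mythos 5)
Your argument is correct and is essentially the paper's own proof stated in contrapositive form: both rest on the induced map $G\wedge G\to (G/N)\wedge (G/N)$ together with the fact that a capable group has trivial exterior center. No meaningful difference in approach.
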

\begin{proof}Let $\pi:G\to G/N$ be the projection map. We will show that if $g\in G\setminus N$, then $g \notin Z^{\wedge}(G)$. Let $g\in G\setminus N$,  we have $\pi(g)\neq 1 $. Since $G/N$ is capable, $\pi(g)\notin Z^{\wedge}(G/N)$. Thus, there exists $\bar{h}\in G/N$ such that $\pi(g)\wedge \bar{h}\neq 1.$ Let $h\in G $ with $\pi(h)= \bar{h}$, and note that $g\wedge h \neq 1$. So $g\notin Z^{\wedge}(G)$.
\end{proof}

\begin{corollary}\label{C:5.3}
For a finite non-cyclic $p$-group $G$, we have $Z^{\wedge}(G)\leqslant \Phi(G).$
\end{corollary}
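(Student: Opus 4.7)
The plan is to apply Lemma~\ref{P:5:1} with $N = \Phi(G)$, which reduces the corollary to showing that $V := G/\Phi(G)$ is capable, i.e., that $Z^{\wedge}(V) = 1$.

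Since $G$ is a finite $p$-group, $\Phi(G) = G^{p}[G,G]$, so $V$ is elementary abelian. As $G$ is non-cyclic, $d(G) \geq 2$, and hence $V \cong (\mathbb{Z}/p\mathbb{Z})^{d}$ with $d \geq 2$. For an abelian group, all the $G$-actions in the relations defining the nonabelian tensor square are trivial, so $V \otimes V$ collapses to the ordinary $\mathbb{Z}$-tensor product and $V \wedge V$ becomes the ordinary $\mathbb{F}_{p}$-exterior power $\bigwedge^{2} V$, of dimension $\binom{d}{2} \geq 1$. It is then immediate that for every nonzero $v \in V$ one can extend $\{v\}$ to an $\mathbb{F}_{p}$-basis of $V$ and take $w$ to be any other basis vector to obtain $v \wedge w \neq 0$. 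Therefore $Z^{\wedge}(V) = 1$, so $V$ is capable.

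With $V$ capable, Lemma~\ref{P:5:1} yields $Z^{\wedge}(G) \leqslant \Phi(G)$ at once. The corollary is thus essentially a one-line application of the preceding lemma; no genuine obstacle arises, the only substantive input being the elementary observation that a nontrivial elementary abelian $p$-group of rank at least $2$ has trivial exterior centre (equivalently, the classical Baer criterion giving capability of $(\mathbb{Z}/p\mathbb{Z})^{d}$ for $d \geq 2$).
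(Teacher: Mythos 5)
Your proof is correct and follows the same route as the paper: reduce to Lemma~\ref{P:5:1} applied with $N=\Phi(G)$ after noting that $G/\Phi(G)$ is a non-cyclic elementary abelian $p$-group and hence capable. The only difference is cosmetic: where the paper cites Baer's classification of capable abelian groups, you verify directly that $\bigwedge^2\bigl((\mathbb{Z}/p\mathbb{Z})^{d}\bigr)$ detects every nonzero vector for $d\geq 2$, which is exactly the triviality of the exterior centre that the proof of Lemma~\ref{P:5:1} actually uses.
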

\begin{proof}
$G/\Phi(G)$ is a non-cyclic elementary abelian group. Thus $G/\Phi(G)$ is capable by \cite{RB1938} and the result follows by lemma \ref{P:5:1}.
\end{proof}
\begin{corollary}
Let $G$ be a finite non-cyclic group. If $G/[G,G] \cong \mathbb Z_{n_1}\times\cdots\times\mathbb Z_{n_k}\times \mathbb Z_{n_{k+1}}$, $n_i\vert n_{i+1}$ for $1\leq i\leq k$, then $Z^{\wedge}(G)\leqslant [G,G]G^{n_k}.$ In particular, if $G/[G,G]$ is capable, then $Z^{\wedge}(G)\leqslant [G,G]$
\end{corollary}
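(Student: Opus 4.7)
The plan is to invoke Lemma \ref{P:5:1} with $N=[G,G]G^{n_k}$, so that it suffices to show $G/N$ is capable. The ``in particular'' statement is then immediate: if $G/[G,G]$ is already capable, applying Lemma \ref{P:5:1} with $N=[G,G]$ directly yields $Z^{\wedge}(G)\leqslant[G,G]$.

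First I would compute the quotient $G/([G,G]G^{n_k})$ explicitly. Since $[G,G]\leqslant [G,G]G^{n_k}$, this quotient is the abelianization $G/[G,G]$ modulo the image of $G^{n_k}$, which is $n_k$ times the abelianization. Writing $A=G/[G,G]\cong \mathbb{Z}_{n_1}\times\cdots\times\mathbb{Z}_{n_k}\times\mathbb{Z}_{n_{k+1}}$, the divisibility $n_i\mid n_k$ for $i\leq k$ forces $n_k\mathbb{Z}_{n_i}=0$ in those coordinates, while $n_k\mid n_{k+1}$ gives $\mathbb{Z}_{n_{k+1}}/n_k\mathbb{Z}_{n_{k+1}}\cong\mathbb{Z}_{n_k}$. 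Hence
\[
G/([G,G]G^{n_k})\;\cong\;\mathbb{Z}_{n_1}\times\cdots\times\mathbb{Z}_{n_k}\times\mathbb{Z}_{n_k},
\]
which is in invariant factor form with the two largest factors equal.

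Second, I would invoke Baer's classical characterization of capable finite abelian groups (the same result cited via \cite{RB1938} in Corollary \ref{C:5.3}): a finite abelian group in invariant factor form $\mathbb{Z}_{a_1}\times\cdots\times\mathbb{Z}_{a_m}$ with $a_1\mid\cdots\mid a_m$ is capable iff $m\geq 2$ and $a_{m-1}=a_m$. Our quotient satisfies this, so it is capable, and Lemma \ref{P:5:1} gives $Z^{\wedge}(G)\leqslant[G,G]G^{n_k}$.

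I do not expect a real obstacle here, as the argument is essentially a combination of Lemma \ref{P:5:1} with a direct calculation in an abelian quotient plus Baer's theorem. The only step requiring a tiny bit of care is the verification that the list $n_1\mid\cdots\mid n_k\mid n_k$ really is a legitimate invariant factor sequence so that Baer's criterion applies in its stated form; the hypothesis $n_i\mid n_{i+1}$ makes this transparent. The non-cyclic hypothesis on $G$ ensures $k\geq 1$, so that $n_k$ is defined and the resulting abelian quotient has at least two invariant factors, as required by the capability criterion.
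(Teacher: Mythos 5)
Your proposal is correct and follows exactly the paper's argument: identify $G/([G,G]G^{n_k})\cong \mathbb{Z}_{n_1}\times\cdots\times\mathbb{Z}_{n_k}\times\mathbb{Z}_{n_k}$, conclude capability from Baer's criterion as cited in \cite{RB1938}, and apply Lemma \ref{P:5:1}. You simply spell out the intermediate computation and the invariant-factor check in more detail than the paper does.
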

\begin{proof}
Observe that $G/[G,G]G^{n_k}\cong \mathbb Z_{n_1}\times\cdots\times\mathbb Z_{n_k}\times \mathbb Z_{n_k}$. Thus $G/[G,G]G^{n_k}$ is capable by \cite{RB1938} and the result follows by lemma \ref{P:5:1}.
\end{proof}
Recall that a unicentral group is a group whose exterior center coincides with its center. Denoting the minimal number of generators of $G$ by $d(G)$, we have the following theorem:
\begin{theorem}
Let $G$ and $H$ be finite $p$-groups. If $G/Z^{\wedge}(G)\cong H/Z^{\wedge}(H)$, then $d(G)= d(H)$. In particular, if $G$ and $H$ are exterior isoclinic, then $d(G)= d(H)$.
\end{theorem}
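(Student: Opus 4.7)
The plan is to reduce the claim to the Burnside basis theorem, which says that for a finite $p$-group $G$ one has $d(G) = \dim_{\mathbb{F}_p}(G/\Phi(G))$, where $\Phi(G) = [G,G]G^p$. Hence it will suffice to upgrade the hypothetical isomorphism $\phi: G/Z^{\wedge}(G) \xrightarrow{\sim} H/Z^{\wedge}(H)$ to an isomorphism $G/\Phi(G) \cong H/\Phi(H)$.

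First I would dispose of the cyclic case. If $G$ is cyclic, then from $g \wedge g = 1$ together with bilinearity one sees $G \wedge G = 1$, so $Z^{\wedge}(G) = G$ and the hypothesis forces $H/Z^{\wedge}(H) = 1$ as well. The contrapositive of Corollary \ref{C:5.3} then forces $H$ to be cyclic also: otherwise one would have $Z^{\wedge}(H) \leqslant \Phi(H) < H$, so $H/Z^{\wedge}(H)$ would be nontrivial. Both being (nontrivial) cyclic $p$-groups, $d(G) = d(H) = 1$.

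When both $G$ and $H$ are non-cyclic, Corollary \ref{C:5.3} gives $Z^{\wedge}(G) \leqslant \Phi(G)$ and $Z^{\wedge}(H) \leqslant \Phi(H)$. The standard fact that $\Phi(G/N) = \Phi(G)/N$ whenever $N \trianglelefteq G$ sits inside $\Phi(G)$ (maximal subgroups of $G/N$ pull back to maximal subgroups of $G$, all of which already contain $\Phi(G) \supseteq N$) yields $\Phi(G/Z^{\wedge}(G)) = \Phi(G)/Z^{\wedge}(G)$ and similarly for $H$. The isomorphism $\phi$ then carries Frattini subgroup to Frattini subgroup and descends to
$$G/\Phi(G) \;\cong\; \bigl(G/Z^{\wedge}(G)\bigr)/\Phi\bigl(G/Z^{\wedge}(G)\bigr) \;\xrightarrow{\sim}\; \bigl(H/Z^{\wedge}(H)\bigr)/\Phi\bigl(H/Z^{\wedge}(H)\bigr) \;\cong\; H/\Phi(H),$$
and Burnside's basis theorem delivers $d(G) = d(H)$. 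The ``in particular'' clause is then immediate, since the convention $E_0^{\wedge}(G) := Z^{\wedge}(G)$ means that an exterior isoclinism between $G$ and $H$ supplies precisely the isomorphism $G/Z^{\wedge}(G) \cong H/Z^{\wedge}(H)$ demanded by the first part.

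The main obstacle I anticipate is not computational — the Frattini descent in the non-cyclic regime is mechanical once Corollary \ref{C:5.3} is available — but rather the bookkeeping at the cyclic boundary, where that corollary fails to apply and one must observe directly both that cyclic $p$-groups have trivial exterior square and that this triviality propagates across $\phi$ to force cyclicity of the other group.
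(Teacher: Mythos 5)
Your proposal is correct and follows essentially the same route as the paper: Corollary \ref{C:5.3} gives $Z^{\wedge}\leqslant\Phi$ in the non-cyclic case, the Frattini quotient descends, and Burnside's basis theorem finishes. The only cosmetic difference is at the cyclic boundary, where you invoke the contrapositive of Corollary \ref{C:5.3} while the paper re-runs the underlying capability argument (via Lemma \ref{P:5:1}) for unicentral abelian groups --- both rest on the same lemma, and your version is, if anything, slightly tidier.
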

\begin{proof}
Suppose $d(G)=1$, then $G= Z^{\wedge}(G).$ Since $G/Z^{\wedge}(G)\cong H/Z^{\wedge}(H)$, we have $H= Z^{\wedge}(H).$ Thus $H$ is a unicentral group. We claim that a unicentral finite abelian group is cyclic. Suppose not, then $H\cong \mathbb Z_{n_1}\times\cdots\times\mathbb Z_{n_k}\times \mathbb Z_{n_{k+1}}$, and $H/H^{n_k}\cong \mathbb Z_{n_1}\times\cdots\times\mathbb Z_{n_k}\times \mathbb Z_{n_k}$, and hence $H/H^{n_k}$ is capable. By Proposition \ref{P:5:1}, we obtain that $Z^{\wedge}(H)\leqslant H^{n_k}$, a contradiction. Thus $H$ is cyclic and so $d(H)=1$. We may now assume that $d(G)\geq 2 $ and $d(H)\geq 2$. By corollary \ref{C:5.3}, $Z^{\wedge}(G)\leqslant \Phi(G)$ and so $\Phi(G/ Z^{\wedge}(G))= \Phi(G)/Z^{\wedge}(G)$. Thus, we have $\frac{G/ Z^{\wedge}(G)}{\Phi(G/ Z^{\wedge}(G))}= \frac{G/ Z^{\wedge}(G)}{\Phi(G)/Z^{\wedge}(G)}\cong G/\Phi(G).$ Hence, $d(G)= dim_{\mathbb F_p}(G/\Phi(G))= dim_{\mathbb F_p}(\frac{G/ Z^{\wedge}(G)}{\Phi(G/ Z^{\wedge}(G))}) = d(G/ Z^{\wedge}(G))$. Similarly $d(H)= d(H/ Z^{\wedge}(H))$. Moreover the isomorphism $G/Z^{\wedge}(G)\cong H/Z^{\wedge}(H)$ yields $d(G/Z^{\wedge}(G))= d(H/Z^{\wedge}(H))$, and hence $d(G)=d(H)$.
\end{proof}

\section{Invariance of q-Bogomolov Multiplier}
 In this section, we consider two distinct quotients of the $q$-Schur Multiplier of groups. Two notions of isoclinism are defined and it is observed that the considered quotients are invariant under the appropriate notion of isoclinism. 
The $q$-Bogomolov Multiplier, $B_0^q(G)$, and $\widehat{q}$-Bogomolov Multiplier , $\widehat{B_0^q (G)}$, are defined using the normal subgroups $M_0^q(G)$ and $\widehat{M_0^q(G)}$ of the $q$-Schur Multiplier described below.
\begin{align*}
M_0^q(G) &:= \left\langle x \wedge y | [x,y] = 1 \right\rangle.\\
B_0^q(G) &:= M^q(G)/M_0^q(G).\\
\widehat{M_0^q(G)} &:= \left\langle x \wedge y, \{(g,g)\} | [x,y] = 1, g^q = 1  \right\rangle.\\
\widehat{B_0^q(G)} &:= M^q(G)/\widehat{M_0^q(G)}.
\end{align*}
 We have following normal subgroups of $G$. \begin{align*}
     Z_q(G) &:= \{g\in Z(G)\mid \{(g,g)\}\in M_0^q(G)\}.\\ 
\widehat{Z_q(G)} &:= \{g\in Z(G)\mid g^q=1\}.
 \end{align*}

\begin{definition}
Two groups $G$ and $H$ are $q$-isoclinic if there exists isomorphisms $\alpha : G/Z_q(G) \rightarrow H/Z_q(H)$ and $\beta : G^q[G,G] \rightarrow H^q[H,H]$ with the property that whenever $\alpha(a_1Z_q(G)) = a_2Z_q(H)$ and $\alpha(b_1Z_q(G)) = b_2Z_q(H),$ then $\beta([a_1,b_1]) = [a_2,b_2]$ and  $\beta(a_1 ^q) = a_2 ^q$ for $a_1,b_1 \in G$, for $a_1,b_1 \in G$.
\end{definition}

\begin{definition}
Two groups $G$ and $H$ are $\widehat{q}$-isoclinic if there exists isomorphisms $\alpha : G/\widehat{Z_q(G)} \rightarrow H/\widehat{Z_q(H)}$ and $\beta : G^q[G,G] \rightarrow H^q[H,H]$ with the property that whenever $\alpha(a_1\widehat{Z_q(G)}) = a_2\widehat{Z_q(H)}$ and $\alpha(b_1\widehat{Z_q(G)}) = b_2\widehat{Z_q(H)},$ then $\beta([a_1,b_1]) = [a_2,b_2]$ and $\beta(a_1 ^q) = a_2 ^q$ for $a_1,b_1 \in G$.
\end{definition}

\begin{theorem}\label{T:4.2}
 $B_0^q(G)$ and $\widehat{B_0^q(G)}$ are invariant under $q$-isoclinism and $\widehat{q}$-isoclinism respectively. 
\end{theorem}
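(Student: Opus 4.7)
The strategy parallels Theorem~\ref{T:5.1}(ii) with $M^q$ replaced by $B_0^q$. Set $\mathcal{Q}^q(G) := (G\wedge^q G)/M_0^q(G)$. Since $M_0^q(G)\leqslant M^q(G) = \ker\eta$, the map $\eta$ descends to a short exact sequence
\[
1 \longrightarrow B_0^q(G) \longrightarrow \mathcal{Q}^q(G) \longrightarrow G^q[G,G] \longrightarrow 1,
\]
and analogously one has $1\to \widehat{B_0^q(G)}\to \widehat{\mathcal{Q}}^q(G) \to G^q[G,G]\to 1$ with $\widehat{\mathcal{Q}}^q(G):=(G\wedge^q G)/\widehat{M_0^q(G)}$. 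The goal is to show that a $q$-isoclinism $(\alpha,\beta)$ induces an isomorphism $\phi\colon \mathcal{Q}^q(G)\to \mathcal{Q}^q(H)$ compatible with $\beta$ on the right; the short five lemma then forces $B_0^q(G)\cong B_0^q(H)$, and the $\widehat{q}$-case is treated identically.

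Given $(\alpha,\beta)$, I would define $\phi$ on generators by $g_1\wedge^q g_2 \mapsto h_1\wedge^q h_2$ and $\{(g,g)\}\mapsto\{(h,h)\}$, where $h,\,h_i\in H$ are arbitrary lifts of $\alpha(gZ_q(G))$ and $\alpha(g_iZ_q(G))$. The key preliminary observation is that for every $z\in Z_q(H)$ and $w\in H$, both $w\wedge^q z$ and $\{(z,z)\}$ lie in $M_0^q(H)$: the former because $z\in Z(H)$ gives $[w,z]=1$, the latter by the very definition of $Z_q(H)$. Expanding $(h_1z)\wedge^q h_2$ and $h_1\wedge^q(h_2z)$ via relations~(\ref{eq:E:2.11}) and~(\ref{eq:E:2.12}), and $\{(hz,hz)\}$ via relation~(\ref{eq:E:2.14}), every correction factor is a product of symbols of the form $a\wedge^q z$ or $\{(z,z)\}$, hence lies in $M_0^q(H)$. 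Thus $\phi$ is independent of the choice of lifts.

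The main task is then to verify that $\phi$ respects the six defining relations~(\ref{eq:E:2.11})--(\ref{eq:E:2.16}) modulo $M_0^q(H)$. Relations~(\ref{eq:E:2.11}) and~(\ref{eq:E:2.12}) transfer because lifts can be chosen multiplicatively up to the harmless $Z_q(H)$-ambiguity treated above. Relations~(\ref{eq:E:2.13}) and~(\ref{eq:E:2.15}), which involve the conjugation action of $\{(g,g)\}$ and the commutator $[\{(g,g)\},\{(g_1,g_1)\}]=g^q\wedge^q g_1^q$, are transported correctly because $\beta(g^q)=h^q$ by the definition of $q$-isoclinism. Relations~(\ref{eq:E:2.14}) and~(\ref{eq:E:2.16}) are governed by the identity $\beta([g_1,g_2])=[h_1,h_2]$, which is built into the $q$-isoclinism data. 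Running the same construction with $(\alpha^{-1},\beta^{-1})$ furnishes a two-sided inverse, so $\phi$ is an isomorphism. Applying the short five lemma to the commutative diagram
\[
\begin{tikzcd}
1\arrow{r}& B_0^q(G)\arrow{r}\arrow{d}& \mathcal{Q}^q(G) \arrow{r}\arrow{d}{\phi} & G^q[G,G] \arrow{r}\arrow{d}{\beta} &1\\
1\arrow{r}& B_0^q(H)\arrow{r} &\mathcal{Q}^q(H) \arrow{r} & H^q[H,H]\arrow{r} &1
\end{tikzcd}
\]
then yields $B_0^q(G)\cong B_0^q(H)$. The $\widehat{q}$-statement is verbatim with $M_0^q$, $Z_q$, and $B_0^q$ replaced by their hatted counterparts: for $z\in \widehat{Z_q(H)}$, centrality gives $w\wedge^q z\in \widehat{M_0^q(H)}$, and the relation $z^q=1$ makes $\{(z,z)\}$ a defining generator of $\widehat{M_0^q(H)}$.

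\emph{Main obstacle.} The delicate point is verifying that $\phi$ respects relation~(\ref{eq:E:2.14}), whose right-hand side is an indexed product involving the iterated conjugates $\,^{g^{1-q+i}}g_1$. The images of these conjugates under $\alpha$ are only determined modulo $Z_q(H)$, and one must carefully track all resulting discrepancies to confirm that they collapse to elements of $M_0^q(H)$. This is exactly the point at which the design of $Z_q(G)$, as opposed to the full centre, becomes indispensable.
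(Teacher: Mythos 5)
Your proposal is correct and follows essentially the same route as the paper: define $\phi$ on generators via lifts through $\alpha$, use the definition of $Z_q(H)$ (centrality gives $w\wedge^q z\in M_0^q(H)$ and $\{(z,z)\}\in M_0^q(H)$) to get independence of lifts and to kill $M_0^q(G)$, verify the defining relations, build the inverse from $(\alpha^{-1},\beta^{-1})$, and conclude with the commutative diagram of short exact sequences. The only cosmetic difference is that you invoke the short five lemma explicitly where the paper just reads off the kernel isomorphism from $\phi$ and $\beta$.
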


\begin{proof}
Given isomorphisms $\alpha$ and $\beta$  as in the definition of $q$-isoclinism, we first show that the following map is a well defined isomorphism:
\begin{align*}
\phi:\hspace{1.5mm} &G\wedge ^q G/M_0^q(G) \rightarrow H\wedge ^q H/M_0^q(H)\\
      & \overline{x_1 \wedge y_1} \mapsto \overline{x_2 \wedge y_2},\ 
       \overline{\{(x_1,x_1)\}} \mapsto  \overline{\{(x_2,x_2)\}},
\end{align*}
where $\alpha(x_1Z_q(G)) = x_2Z_q(H)$ and $\alpha(y_1Z_q(G)) = y_2Z_q(H)$. Towards that, consider the map $\phi : G\wedge ^q G \rightarrow\hspace{1mm} H\wedge ^q H/M_0^q(H)\hspace{1mm}$ with $x_1 \wedge y_1 \mapsto \overline{x_2 \wedge y_2}$ and  $\{(x_1,x_1)\} \mapsto  \overline{\{(x_2,x_2)\}}$, where $x_2$ and $y_2$ are obtained as above.
For $\alpha(\overline{x_1}) = \overline{x_2 a_2}$ and $\alpha(\overline{y_1}) = \overline{y_2 b_2}$, where $a_2,b_2 \in Z_q(H)$ we have,
$\overline{x_2a_2 \wedge y_2b_2} = \overline{x_2\wedge y_2}.$
Also,$\overline{\{(x_2 a_2, x_2 a_2)\}} = \overline{\{(x_2,x_2)\}\prod_{i=1}^{q-1}(x_2^{-1} \wedge {a_2} ^i)\{(a_2,a_2)\}}
                                  = \overline{\{(x_2,x_2)\}}.$ Suppose $[g_1,g_2] = 1$. For $h_1,h_2 \in H$ such that $\alpha(\overline{g_1}) = \overline{h_1}$ and $\alpha(\overline{g_2}) = \overline{h_2}$, we have $ [h_1,h_2] = \beta([g_1,g_2]) = 1$, which yields 
$\phi(\overline{g_1 \wedge g_2}) = \overline{h_1\wedge h_2}
                                = 1.$\\
Thus $\phi(M_0^q(G)) = 1$, and we obtain a well defined map $\phi :\hspace{1.5mm} G\wedge ^q G/M_0^q(G)\rightarrow H\wedge ^q H/M_0^q(H)$. To show that $\phi$ is a homomorphism, it suffices to verify that $\phi$ satisfies all the relations in the q-tensor square. We demonstrate a few below and the others follow similarly. For each $g_i \in G$, let $h_i \in H$ with $\alpha(\overline{g_i}) = \overline{h_i}$, $i \in \{1,2,3\}$, and suppose $\alpha(\overline{g_1g_2}) = \overline{h}$, then we have $h = h_1h_2a$ for some $a \in Z_q(H)$. Now,
\begin{align*}
\phi(\overline{g_1 g_2\wedge g_3}) &= \overline{h \wedge h_3}\\
                                  &= \overline{h_1h_2a \wedge h_3}\\
                                  &= \overline{h_1h_2 \wedge h_3}\\
                                  &= \overline{^{h_1}(h_2\wedge h_3)(h_1 \wedge h_3)}\\
                                  &= \phi(\overline{^{g_1}(g_2 \wedge g_3)(g_1\wedge g_3)}).
\end{align*}
 Let $g,g_1 \in G$ such that $\alpha(\overline{g}) = \overline{h}$, $\alpha(\overline{g_1}) = \overline{h_1}$ and $\alpha(\overline{gg_1}) = \overline{h'}$. Then we have $h' = hh_1a$ for some $a \in Z_q(H)$.

\begin{align*}
\phi(\overline{\{g_1g_2,g_1g_2\}}) &= \overline{\{(h,h)\}}\\
                               &= \overline{\{h_1h_2a,h_1h_2a\}}\\
                               &= \overline{\{h_1h_2,h_1h_2\}}\\
                               &= \overline{\{(h_1,h_1)\}\prod_{i=1}^{q-1}(h_1^{-1} \wedge (^{h_1^{1-q+i}}h_2)^i)\{(h_2,h_2)\}}\\
                               &= \phi(\overline{\{(g_1,g_1)\}\prod_{i=1}^{q-1}(g_1^{-1} \wedge (^{g_1^{1-q+i}}g_2)^i)\{(g_2,g_2)\}}).
\end{align*}
The inverse map $\psi:H\wedge ^q H/M_0^q(H) \rightarrow G\wedge ^q G/M_0^q(G)$ can be define similarly using $\alpha^{-1} $ and $\beta^{-1}$. Hence $\phi$ is an isomorphism and we have the following commutative diagram:
\[
 \begin{tikzcd}
1\arrow{r}& B^q_0(G)\arrow{r}\arrow{d}& G\wedge^q G/M_0^q(G) \arrow{r}\arrow{d}{\phi} & G^q{[G,G]} \arrow{r}\arrow{d}{\beta} &1\\
1\arrow{r}& B_0^q(H)\arrow{r} &H\wedge^q H/M_0^q(H) \arrow{r} & H^q{[H,H]}\arrow{r} &1.
\end{tikzcd}\\
\]
The maps $\phi$ and $\beta$ being isomorphism, yields $B_0^q(G)\cong B_0^q(H).$ The invariance of $\widehat{B_0^q(G)}$ under $\widehat{q}$-isoclinism can be proved similarly.
\end{proof}

\begin{Remark}
Let $G$ and $H$ be two groups, and $A \subset Z_q(G)$ and $B \subset Z_q(H)$. If there exists a pair of isomorphisms $\alpha : G/A \rightarrow H/B$ and $\beta : G^q[G,G] \rightarrow H^q[H,H]$ with the property that whenever $\alpha(a_1A) = a_2B$ and $\alpha(b_1A) = b_2B,$ then $\beta([a_1,b_1]) = [a_2,b_2]$ and  $\beta(a_1 ^q) = a_2 ^q$ for $a_1,b_1 \in G$, for $a_1,b_1 \in G$. Then using the arguments in the above theorem, we can prove that $B_0^q(G)\cong B_0^q(H)$.
\end{Remark}

\begin{Remark}
Let $G$ and $H$ be two groups, and $A \subset \widehat{Z_q(G)}$ and $B \subset \widehat{Z_q(H)}$. If there exists a pair of isomorphisms $\alpha : G/A \rightarrow H/B$ and $\beta : G^q[G,G] \rightarrow H^q[H,H]$ with the property that whenever $\alpha(a_1A) = a_2B$ and $\alpha(b_1A) = b_2B,$ then $\beta([a_1,b_1]) = [a_2,b_2]$ and  $\beta(a_1 ^q) = a_2 ^q$ for $a_1,b_1 \in G$, for $a_1,b_1 \in G$. Then using the arguments in the above theorem, we can prove that $\widehat{B_0^q(G)}\cong \widehat{B_0^q(H)}$.
\end{Remark}

\begin{Remark}
Note that $E_q^{\wedge}(G)\leqslant Z_q(G)$ and $E_q^{\wedge}(G)\leqslant \widehat{Z_q(G)}$. So if $G$ and $H$ are $q$-exterior isoclinic, then $B_0^q(G)\cong B_0^q(H)$ and $\widehat{B_0^q(G)}\cong \widehat{B_0^q(H)}$.
\end{Remark}
 
\section{Some Structural Results on q-Nonabelian Tensor Square of a Group}

In this section, we generalize some structural results proved in \cite{ADST} and \cite{RR2017}. We begin with the following lemma on the $q$-tensor square of abelian groups.

\begin{lemma}\label{L:5.1}
If $A$ is an abelian group, then the following statements hold:
\begin{enumerate}
    \item[(i)] $A\otimes^q A$ is abelian.
    \item[(ii)] If $A$ is a finitely generated group and $q$ be any non-negative integer, then $A\otimes^q A$ is a finitely generated group.
\end{enumerate}
\end{lemma}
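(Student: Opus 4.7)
For \emph{(i)}, the plan is to exploit two consequences of abelianness: the conjugation action of $A$ on itself is trivial (so ${}^{a^q}x = x$ for all $a, x \in A$), and $[a,b] = 1$ for all $a,b \in A$, which combined with relation \eqref{eq:E:2.16} and Lemma \ref{L:1.2.3}~(xiii) forces $(a \otimes b)^q = \{(1,1)\} = 1$ in $A \otimes^q A$. With these in hand, I would verify commutativity pairwise on the generators. Tensor--tensor commutators vanish by Lemma \ref{L:1.2.3}~(iv). Tensor--curly commutators vanish because relation \eqref{eq:E:2.13} collapses to $\{(a,a)\}(b \otimes c)\{(a,a)\}^{-1} = b \otimes c$ once the action is trivialized. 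For curly--curly commutators, \eqref{eq:E:2.15} gives $[\{(a,a)\}, \{(b,b)\}] = a^q \otimes b^q$, and applying Lemma \ref{L:1.2.3}~(vii) twice (legitimate since $A$ is abelian) rewrites this as $(a \otimes b)^{q^2}$, which is trivial by the observation above.

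For \emph{(ii)}, when $q = 0$ the tensor square is the classical Brown--Loday construction, which for abelian $A$ coincides with the ordinary tensor product $A \otimes_{\mathbb{Z}} A$ and is therefore finitely generated whenever $A$ is. For $q \ge 1$, I would fix a finite generating set $a_1,\ldots,a_n$ of $A$ and show that
\[
S := \{a_i \otimes a_j : 1 \le i, j \le n\} \cup \{\{(a_i, a_i)\} : 1 \le i \le n\}
\]
generates $A \otimes^q A$. Under the trivial action, relations \eqref{eq:E:2.11} and \eqref{eq:E:2.12} become bilinear; combined with $1 \otimes b = 1$ and $a^{-1} \otimes b = (a \otimes b)^{-1}$ (from Lemma \ref{L:1.2.3}~(vii)), every $a \otimes b$ therefore lies in $\langle S \rangle$. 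To bring the symbols $\{(a,a)\}$ into $\langle S \rangle$, I would induct on the word length of $a$ in the letters $a_1^{\pm 1}, \ldots, a_n^{\pm 1}$ by peeling off one letter at a time via \eqref{eq:E:2.14}; the product appearing in \eqref{eq:E:2.14} lies in the tensor subgroup, which is already inside $\langle S \rangle$. The base case requires expressing $\{(a_i^{-1}, a_i^{-1})\}$ in terms of $\{(a_i, a_i)\}$ and tensor elements, which comes out by plugging $g = a_i$, $g_1 = a_i^{-1}$ into \eqref{eq:E:2.14} and solving with $\{(1,1)\} = 1$.

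The main bookkeeping obstacle will be the inductive reduction of $\{(a,a)\}$ in part (ii), particularly handling the negative-exponent case without drowning in products. Fortunately, part \emph{(i)}---established first---guarantees $A \otimes^q A$ is already abelian, so the order of factors in any expression is irrelevant, which significantly simplifies both the inverse computation and the induction.
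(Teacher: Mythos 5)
Your proposal is correct and follows essentially the same route as the paper: part (i) reduces to checking commutativity on generators via Lemma \ref{L:1.2.3}~(iv), the trivialized relation \eqref{eq:E:2.13}, and the computation $a^q\otimes b^q=1$ from \eqref{eq:E:2.15}, \eqref{eq:E:2.16} and Lemma \ref{L:1.2.3}~(vii); part (ii) expands $a\otimes b$ bilinearly and reduces $\{(a,a)\}$ to the generators. The only (harmless) difference is that you reduce the $\{(a,a)\}$ symbols uniformly by induction on word length via \eqref{eq:E:2.14}, whereas the paper invokes Lemma \ref{L:1.2.3}~(ix) for odd $q$ and defers to the same \eqref{eq:E:2.14} expansion for even $q$.
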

\begin{proof}\textit{(i)} It suffices to show that the generators of $A\otimes^q A$ commute with each other.
By lemma \ref{L:1.2.3} $(iv)$, we have $[g \otimes h, g_1 \otimes h_1] =\ [g,h]\otimes [g_1,h_1]=1$ for all $g,g_1,h,h_1, \in G$. Using \eqref{eq:E:2.15}, we obtain $[\{(g,g)\},\{(h,h)\}] =\ g^q \otimes h^q$. Now applying
lemma \ref{L:1.2.3} $(vii)$ and then \eqref{eq:E:2.16} yields, $(g^q \otimes h^q) = (g^q \otimes h)^q = \{([g^q,h],[g^q,h])\}  = 1$. Furthermore, by \eqref{eq:E:2.13} we have
$\{(g,g)\}(g_1\otimes h_1)\{(g,g)\}^{-1} =\ g_1 \otimes h_1$, and hence the proof.
\par \textit{(ii)} Let $\{x_i, 1\leq i\leq n\}$ be the generators of $A$ with an ordering. We claim that $\{ x_i\otimes x_j,\{(x_i,x_i)\}|1\leq i,j\leq n\}$ is a generating set for $A\otimes^q A$.
Consider $a,b \in A$, where $a = \prod\limits_{i=1}^{n}x_i^{m_i}$ and $b = \prod\limits_{i=1}^{n}x_i^{l_i}$ for some $m_i,l_i\in \mathbb{Z}$. Since $A\otimes^q A$ is abelian, we have
$a \otimes b
=\ \prod\limits_{i=1}^{n}(x_i \otimes x_i)^{m_il_i}\prod\limits_{1\leq i<j\leq n}(x_i\otimes x_j)^{m_il_j}(x_j\otimes x_i)^{m_j l_i}$. Applying lemma \ref{L:1.2.3} $(ix)$, we obtain $\{(a,a)\} =\ \prod\limits_{i=1}^{n}\{(x_i,x_i)\}^{m_i}$ for $q$ odd,
and  when $q$ is even, using \eqref{eq:E:2.14} and expanding as before yields the proof.
\end{proof}

Now we come to the main theorem of this section.

\begin{theorem}\label{T:5.3}
Let $k$ be an odd integer. If $(g \otimes g)^k = 1$ for all $g \in G$, then $G\otimes^q G \cong \nabla^q(G) \times G \wedge^q G$.
\end{theorem}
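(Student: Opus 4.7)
My strategy is to construct a homomorphic retraction $\phi : G \otimes^q G \to \nabla^q(G)$. First I would show that $\nabla^q(G)$ is actually central in $G \otimes^q G$: Lemma~\ref{L:1.2.3}(i) handles commutativity with the image of $G \otimes G$, while for the remaining $\{(g,g)\}$ generators relation \eqref{eq:E:2.13} gives $\{(g,g)\}(h\otimes h)\{(g,g)\}^{-1} = {}^{g^q}(h\otimes h)$, which equals $h\otimes h$ by the triviality of the $G$-action on $\nabla^q(G)$ (Lemma~\ref{L:1.2.3}(iii)). Note that conjugates of diagonal elements by either $g\otimes h$ (via Lemma~\ref{L:1.2.3}(viii)) or $\{(g,g)\}$ (via \eqref{eq:E:2.13}) remain diagonal, so $\nabla^q(G)$ is generated as a subgroup (not merely as a normal subgroup) by the $h\otimes h$, which lets the centrality argument pass to all of $\nabla^q(G)$. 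Once $\phi$ is in hand, centrality ensures $G \otimes^q G \cong \nabla^q(G) \times \ker(\phi) \cong \nabla^q(G) \times G\wedge^q G$.

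To construct $\phi$, the oddness of $k$ is used: pick an integer $a$ with $2a \equiv 1 \pmod{k}$, and set $c = -aq(q-1)/2$. Define $\phi$ on generators by $\phi(g\otimes h) = \mu(g,h)^a$, where $\mu(g,h) := (g\otimes h)(h\otimes g) \in \Delta^q(G) \subseteq \nabla^q(G)$, and $\phi(\{(g,g)\}) = (g\otimes g)^c$. The choice of $a$ forces $\phi(g\otimes g) = (g\otimes g)^{2a} = g\otimes g$ (using $(g\otimes g)^k=1$), so $\phi$ restricts to the identity on $\nabla^q(G)$. The bulk of the proof is checking that $\phi$ respects \eqref{eq:E:2.11}--\eqref{eq:E:2.16}. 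Relations \eqref{eq:E:2.11} and \eqref{eq:E:2.12} reduce to the bilinearity $\mu(gg_1,h)=\mu(g,h)\mu(g_1,h)$, which I would derive by expanding via the defining relations, pulling the central element $\mu(g,h)$ out of the middle, and stripping the conjugation on $\mu({}^g g_1,{}^g h)$ using Lemma~\ref{L:1.2.3}(iii). Relation \eqref{eq:E:2.13} is immediate from that same triviality, \eqref{eq:E:2.15} follows from two applications of Lemma~\ref{L:1.2.3}(x) with $\exp(\nabla^q(G))\mid q$ forcing $\mu(g^q,g_1^q)=1$, and \eqref{eq:E:2.16} uses Lemma~\ref{L:1.2.3}(xi) on the left-hand side and $\exp(\nabla^q(G))\mid q$ on the right-hand side.

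The main obstacle is \eqref{eq:E:2.14}, and this is precisely what dictates the choice of $c$. Using Lemma~\ref{L:1.2.3}(vi) together with centrality, one rewrites $\phi(\{(gg_1,gg_1)\}) = (gg_1\otimes gg_1)^c = (g\otimes g)^c(g_1\otimes g_1)^c\mu(g,g_1)^c$. The right-hand side of \eqref{eq:E:2.14} requires evaluating $\prod_{i=1}^{q-1}\mu(g^{-1},({}^{g^{1-q+i}}g_1)^i)^a$. Since $g^{-1}$ commutes with $g^{1-q+i}$, the $G$-action identity combined with Lemma~\ref{L:1.2.3}(iii) gives $\mu(g^{-1},{}^{g^{1-q+i}}g_1) = \mu(g^{-1},g_1)$; Lemma~\ref{L:1.2.3}(x) pulls out the exponent $i$, and Lemma~\ref{L:1.2.3}(v) yields $\mu(g^{-1},g_1) = \mu(g,g_1)^{-1}$, so the product collapses to $\mu(g,g_1)^{-a\sum_{i=1}^{q-1}i} = \mu(g,g_1)^{-aq(q-1)/2}$. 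With $c = -aq(q-1)/2$ the two sides of \eqref{eq:E:2.14} agree under $\phi$, so $\phi$ is a well-defined homomorphism, and the direct product decomposition follows.
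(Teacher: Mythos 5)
Your proposal is correct and takes essentially the same route as the paper: the paper also builds a left splitting $\alpha'$ of $1 \to \nabla^q(G) \to G\otimes^q G \to G\wedge^q G \to 1$ with $\alpha'(g\otimes h) = \big((g\otimes h)(h\otimes g)\big)^{-n}$ and $\alpha'(\{(g,g)\}) = (g\otimes g)^{n\binom{q}{2}}$ for $k=2n+1$, which is exactly your $\phi$ with $a=-n$ and $c=-aq(q-1)/2$. The relation checks (in particular \eqref{eq:E:2.14} forcing the exponent on $\{(g,g)\}$ via Lemma \ref{L:1.2.3} \textit{(v)}, \textit{(vi)}, \textit{(x)}) match the paper's verification.
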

\begin{proof}
Let $k = 2n+1$, for some $n \in \mathbb{Z}$.
Now consider the short exact sequence
$1 \rightarrow \nabla^q(G) \xrightarrow{\alpha}G\otimes^q G \rightarrow G\wedge^q G \rightarrow 1,$
where $\alpha$ is the inclusion map. Our aim is to construct a left splitting map for the above exact sequence which would yield the required direct product.
Towards that,
define $\alpha' : G\otimes ^q G \rightarrow \nabla^q(G)$ on the generators as given below.
\begin{align*}
\alpha'(g \otimes h) =& ((g\otimes h)(h \otimes g))^{-n}\\
\alpha'(\{(g,g)\}) =& (g \otimes g)^{n{{q}\choose{2}}},
\end{align*}for all $g,h \in G$. 
Note that, for all $g \in G$, we have
$\alpha' \circ \alpha (g\otimes g) 
= ((g \otimes g)(g \otimes g))^{-n}
= (g \otimes g)$.
Therefore it suffices to prove that $\alpha'$ is a well-defined homomorphism. The verification of the first two relations follows as in Theorem 4.13 of \cite{ADST}. 
Note that from Lemma \ref{L:1.2.3} $(i)$, we have $\nabla^q(G)$ is abelian. Now since $G$ acts trivially on $\Delta^q(G)$,
\begin{align*}
\alpha'(\{(g,g)\}(g_1 \otimes h_1)\{(g,g)\}^{-1}) =&\ (g \otimes g)^{n{{q}\choose{2}}}\big((g_1 \otimes h_1)(h_1\otimes g_1)\big)^{-n}\big((g \otimes g)^{n{{q}\choose{2}}}\big)^{-1}\\
=&\ \big(^{g^q}\big((g_1 \otimes h_1)(h_1\otimes g_1)\big)\big)^{-n}\\
=&\ \alpha'(^{g^q}g_1 \otimes ^{g^q}h_1).
\end{align*}
Similarly,
\begin{align*}
&\alpha'\Big(\{(g,g)\}\prod_{i=1}^{q-1}(g^{-1} \otimes (^{g^{1-q+i}}g_1)^i)\{(g_1,g_1)\}\Big)\\
&=\ (g \otimes g)^{n{{q}\choose{2}}}\prod_{i=1}^{q-1} {^{g^{1-q+i}}}\big((g^{-1}\otimes g_1^i)(g_1^i \otimes g^{-1})\big)^{-n}(g_1 \otimes g_1)^{n{{q}\choose{2}}}\\
&=\ (g \otimes g)^{n{{q}\choose{2}}}\big((g^{-1}\otimes g_1)(g_1 \otimes g^{-1})\big)^{(-n){{q}\choose{2}}}(g_1 \otimes g_1)^{n{{q}\choose{2}}} &&  \mbox{[Lemma\ }\ref{L:1.2.3}\ \textit{(x)}]\\
&=\ (g \otimes g)^{n{{q}\choose{2}}}\big((g_1\otimes g)(g \otimes g_1)\big)^{n{{q}\choose{2}}}(g_1 \otimes g_1)^{n{{q}\choose{2}}}\ &&\mbox{[Lemma}\ \ref{L:1.2.3}\ \textit{(v)} ]\\
&=\ \alpha'(\{(gg_1,gg_1)\})\ &&\mbox{[Lemma}\ \ref{L:1.2.3}\ \textit{(vi)}].
\end{align*}
The other relations can easily be verified, and the proof follows.
\end{proof}

The following Corollary generalizes \cite[Proposition 2.12]{RR2017}.

\begin{corollary}\label{C:5.4}
Let $q$ be an odd integer, then $G \otimes^q G \cong \nabla^q G \times G\wedge^q G$.
\end{corollary}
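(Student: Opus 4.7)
The plan is to deduce this directly from Theorem \ref{T:5.3} by checking its hypothesis with $k = q$. That is, since $q$ is assumed odd, the corollary will follow the moment I verify that $(g \otimes g)^q = 1$ for every $g \in G$.

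To establish this identity, I would use the defining relation \eqref{eq:E:2.16}, which reads $\{([g,h],[g,h])\} = (g \otimes h)^q$ for all $g,h \in G$. Specializing $h = g$, the left-hand side becomes $\{([g,g],[g,g])\} = \{(1,1)\}$, while the right-hand side becomes $(g \otimes g)^q$. By Lemma \ref{L:1.2.3}\textit{(xiii)}, $\{(1,1)\} = 1$, and therefore $(g \otimes g)^q = 1$ as desired.

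With the hypothesis of Theorem \ref{T:5.3} verified (taking $k = q$), the theorem immediately yields $G \otimes^q G \cong \nabla^q(G) \times G \wedge^q G$, completing the proof. There is essentially no obstacle here: the work was already done in Theorem \ref{T:5.3}, and the only thing the corollary adds is the observation that the triviality hypothesis on $(g \otimes g)^k$ is automatic once $k = q$ is odd, via relation \eqref{eq:E:2.16} applied at the diagonal.
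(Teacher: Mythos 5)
Your proof is correct and follows essentially the same route as the paper: both reduce to Theorem \ref{T:5.3} with $k=q$ after noting that $(g\otimes g)^q=1$. The only cosmetic difference is that the paper cites Lemma \ref{L:1.2.3}\textit{(ii)} (that $\exp(\nabla^q(G))\mid q$) for this fact, whereas you re-derive the needed special case directly from relation \eqref{eq:E:2.16} and Lemma \ref{L:1.2.3}\textit{(xiii)}.
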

\begin{proof}
Using Theorem \ref{T:5.3} and lemma \ref{L:1.2.3} \textit{(ii)} yields the result.
\end{proof}

\begin{prop}\label{T:6.3}
If $\nabla^q G$ has unique $n^{th}$ roots where $n$ is even, then $G \otimes^q G \cong \nabla^q G \times G\wedge^q G$.
\end{prop}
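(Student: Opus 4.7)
The plan is to construct a retraction $\alpha'\colon G \otimes^q G \to \nabla^q(G)$ of the inclusion $\nabla^q(G) \hookrightarrow G \otimes^q G$, splitting the short exact sequence
$$1 \to \nabla^q(G) \to G \otimes^q G \to G \wedge^q G \to 1$$
in the spirit of Theorem \ref{T:5.3}. The key observation is that the hypothesis forces unique square roots in $\nabla^q(G)$: given $y$, the unique $n$-th root $z$ of $y$ yields a square root $z^{n/2}$ of $y$ since $n$ is even, and uniqueness follows from the implication $y^2 = w^2 \Rightarrow y^n = w^n \Rightarrow y = w$. Because $\nabla^q(G)$ is abelian (Lemma \ref{L:1.2.3}(i)), the resulting square-root operation $\sqrt{\cdot}\colon \nabla^q(G) \to \nabla^q(G)$ is a group homomorphism.

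Next I would define $\alpha'$ on generators by
$$\alpha'(g \otimes h) := \sqrt{(g\otimes h)(h\otimes g)}, \qquad \alpha'(\{(g,g)\}) := \sqrt{(g \otimes g)^{-\binom{q}{2}}},$$
with both radicands lying in $\nabla^q(G)$ by Lemma \ref{L:1.2.3}(vi). These formulas are modelled on Theorem \ref{T:5.3}: there the exponent condition $(g \otimes g)^k = 1$ with $k = 2n+1$ made the $-n$-th power serve as a square root, since $-2n \equiv 1 \pmod k$. A direct check gives $\alpha'(g \otimes g) = \sqrt{(g \otimes g)^2} = g \otimes g$, so $\alpha' \circ \alpha = \mathrm{id}_{\nabla^q(G)}$ on the generators and hence on all of $\nabla^q(G)$.

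The substance of the proof is then to verify that $\alpha'$ respects each of the defining relations (\ref{eq:E:2.11})--(\ref{eq:E:2.16}) and therefore extends to a homomorphism. Each verification proceeds line by line as in the proof of Theorem \ref{T:5.3}, replacing every $-n$-th power by the square-root homomorphism and invoking abelianness of $\nabla^q(G)$, triviality of the $G$-action on it from Lemma \ref{L:1.2.3}(iii), and Lemma \ref{L:1.2.3}(v), (vi), (x). Relation (\ref{eq:E:2.15}) is in fact automatic, since applying it with $g$ and $g_1$ interchanged gives $(g^q \otimes g_1^q)(g_1^q \otimes g^q) = [\{(g,g)\}, \{(g_1,g_1)\}]\,[\{(g_1,g_1)\},\{(g,g)\}] = 1$, forcing $\alpha'(g^q \otimes g_1^q) = 1$. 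The hard part will be relation (\ref{eq:E:2.14}), whose right-hand side carries the long product $\prod_{i=1}^{q-1}(g^{-1} \otimes (^{g^{1-q+i}}g_1)^i)$; distributing the square root across this product requires careful bookkeeping, exactly mirroring the corresponding calculation in Theorem \ref{T:5.3}. Once $\alpha'$ is confirmed to be a well-defined homomorphism, the centrality of $\nabla^q(G)$ in $G \otimes^q G$ (from Lemma \ref{L:1.2.3}(i) for the image of $G \otimes G$, and from (\ref{eq:E:2.13}) together with Lemma \ref{L:1.2.3}(iii) for the $\{(g,g)\}$ generators) upgrades the split extension to a direct product, and we conclude $G \otimes^q G \cong \nabla^q(G) \times G \wedge^q G$.
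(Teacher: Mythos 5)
Your proposal is correct and takes essentially the same approach as the paper: the paper defines the retraction by $\beta'(g\otimes h) = [(g\otimes h)(h\otimes g)]^{k/n}$ and $\beta'(\{(g,g)\}) = (g\otimes g)^{-\frac{k}{n}\binom{q}{2}}$ with $n=2k$, which is exactly your square-root homomorphism written with fractional exponents, and likewise defers the relation checks to the computations of Theorem \ref{T:5.3}. Your extra observations (that unique even-order roots yield a square-root homomorphism on the abelian group $\nabla^q(G)$, and the handling of relation \eqref{eq:E:2.15}) only make explicit what the paper leaves implicit.
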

\begin{proof}
Let $n = 2k$, $ k \in \mathbb{N}$.
Consider the short exact sequence $1 \to \nabla^q G \xrightarrow{\beta} G \otimes^q G \to G \wedge^q G \to 1$, where $\beta$ is the natural inclusion map. Define $\beta' :G\otimes^q G\rightarrow \nabla^q G$ by 
\begin{align*}
\beta'(g\otimes h) =& [(g\otimes h)(h\otimes g)]^{\frac{k}{n}},\\
\beta'(\{(g,g)\}) =& (g\otimes g)^{{\frac{-k}{n}}{{q}\choose{2}}}.
\end{align*}

 Proceeding as in theorem \ref{T:5.3} yields $\beta'$ is a well defined homomorphism. Moreover, $\beta'\beta(g\otimes g) = [(g\otimes g)(g\otimes g)]^{\frac{k}{n}}= (g\otimes g)$, and hence the proof .
 
\end{proof}

\section*{Acknowledgement} V. Z. Thomas acknowledges research support from SERB, DST, Government of India grant MTR/2020/000483.

\bibliographystyle{amsplain}
\bibliography{Bibliography}
\end{document}